\newtheorem{thm}{Theorem}[section]
\newtheorem{lem}[thm]{Lemma}
\newtheorem{prop}[thm]{Proposition}
\theoremstyle{definition}
\newtheorem{quest}{Question}
\newtheorem{notn}[thm]{Notation}
\begin{document}
\title{Primitive algebraic algebras of polynomially bounded growth}
\author{Jason P. Bell}
\thanks{Research supported by NSERC grant 31-611456.}
\keywords{primitive algebras, Kurosh problem, algebraic algebras, Gelfand-Kirillov dimension}
\subjclass[2000]{16P90}

\address{Jason Bell\\
Department of Mathematics\\
Simon Fraser University\\
Burnaby, BC, V5A 1S6\\ CANADA
}

\email{jpb@math.sfu.ca}

\author{Lance W. Small}
\address{Lance Small\\
Department of Mathematics\\
University of California, San Diego\\
La Jolla, CA, 92093-0112\\
 USA}
\email{lwsmall@math.ucsd.edu}

\author{Agata Smoktunowicz}
\thanks{The research of the third
author was supported by Grant No. EPSRC
 EP/D071674/1.}
\address{Agata Smoktunowicz\\
Maxwell Institute for Mathematical Sciences\\
School of Mathematics, University of Edinburgh\\
James Clerk Maxwell Building, King's Buildings, Mayfield Road\\
Edinburgh EH9 3JZ, Scotland\\
 UK}
 \email{A.Smoktunowicz@ed.ac.uk}

\bibliographystyle{plain}

\date{Oct. 1, 2010}
\begin{abstract}
We show that if $k$ is a countable field, then there exists a finitely generated, infinite-dimensional, primitive algebraic $k$-algebra $A$ whose Gelfand-Kirillov dimension is at most six.  In addition to this we construct a two-generated primitive algebraic $k$-algebra.  We also pose many open problems.
\end{abstract}
\maketitle
\section{Introduction}
In recent years there has been renewed interest in the
construction of algebraic algebras that are not locally
finite-dimensional; that is, algebras that are algebraic over
their base fields, but which have the property that some finitely
generated subalgebra is infinite-dimensional.  The first
construction of such an algebra was made by Golod and Shafarevich
\cite{GS}, which provided a counterexample to a famous conjecture
of Kurosh \cite{Kur}.  The third-named author \cite{Sm1, Sm2, Sm3}
has produced a variety of important counterexamples to conjectures
about algebraic algebras, which go beyond the original
construction of Golod and Shafarevich.

There is an intimate connection between Kurosh's conjecture in ring theory and its group-theoretic counterpart, Burnside's conjecture.  Indeed, Golod and Shafarevich were able to use their construction to produce a finitely generated infinite torsion group, providing the first counterexample to the Burnside problem.  Gromov \cite{Gr} showed that finitely generated groups of polynomially bounded growth are nilpotent-by-finite.  In particular, there cannot exist a finitely generated infinite torsion group of polynomially bounded growth.  In light of this theorem, it is natural to ask whether an analogous result holds for rings.  Surprisingly, Lenagan and Smoktunowicz \cite{LS} showed that over a countable field there exists a finitely generated infinite-dimensional algebraic algebra whose Gelfand-Kirillov dimension (defined in \S2) is at most $20$.  By refining estimates used in this paper,
Lenagan, Smoktunowicz, and Young \cite{LSY} showed that the bound of $20$ on the Gelfand-Kirillov dimension could be lowered to three.

Despite all of the recent progress on Kurosh-type problems, there has been little progress---either with or without restrictions on Gelfand-Kirillov dimension---on the related problem of whether or not there exists an algebraic division ring that is finitely generated and infinite-dimensional as an algebra over its center.  In fact, most of the constructions have had a large, nil Jacobson radical and are very far from being division rings.  A resolution of this problem appears to be far away, but one can nevertheless hope to make some progress by producing counter-examples in small increments that are in some sense increasingly closer to being division rings.  As a partial step towards producing a finitely generated algebraic infinite-dimensional division ring, one can ask whether or not there is a finitely generated infinite-dimensional primitive algebraic algebra.  This question was initially posed by Kaplansky \cite[Problem 15]{K} and answered by the first- and second-named authors \cite{BS}.  In this paper, we show that one can in fact construct such an example in which the Gelfand-Kirillov dimension is at most six.
\begin{thm} \label{thm: main} Let $k$ be a countable field.
 Then there exists a finitely generated infinite-dimensional algebraic primitive $k$-algebra $A$ whose Gelfand-Kirillov dimension is at most six.
\end{thm}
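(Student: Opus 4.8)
The plan is to bootstrap from the Lenagan--Smoktunowicz--Young construction. By \cite{LSY}, over the countable field $k$ there is a finitely generated, graded, infinite-dimensional algebra $R=\bigoplus_{n\ge 1}R_n$, generated by $R_1$, which is nil (hence algebraic over $k$) and satisfies $\mathrm{GKdim}(R)\le 3$. Put $R^1=k\oplus R$; since $R$ is finitely generated and $k$ is countable, $R^1$ is a countable-dimensional $k$-vector space and is still algebraic. Every property demanded of $A$ except primitivity already holds for $R^1$, so the task is to enlarge $R^1$ to a finitely generated algebra $A\supseteq R$ that is still algebraic, still of polynomially bounded growth, and which in addition admits a \emph{faithful simple module}.

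To produce such a module, take a countable-dimensional $k$-vector space $V$ carrying a faithful $R^1$-action; the natural first guess is $V=R^1$ acting on itself by left multiplication, but one wants a ``doubled'' version of this (morally, room for both a left-multiplication copy and a degree-lowering companion) so that it becomes possible to break up submodules. One then adjoins finitely many further operators $\sigma_1,\dots,\sigma_t\in\mathrm{End}_k(V)$ and lets $A\subseteq \mathrm{End}_k(V)$ be the subalgebra generated by the image of $R^1$ together with the $\sigma_i$. The requirements on the pair $(V,\{\sigma_i\})$ are: (a) $V$ is a simple $A$-module; (b) $A$ acts faithfully on $V$; (c) every element of $A$ — not merely each $\sigma_i$ — is algebraic over $k$; (d) $\mathrm{GKdim}(A)\le 6$. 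Equivalently, one works abstractly with a free extension $R^1\langle y_1,\dots,y_t\rangle$ and constructs an ideal $I$ so that $A=R^1\langle y_1,\dots,y_t\rangle/I$ has these properties.

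The construction of the $\sigma_i$ (or of $I$) is carried out by a countable inductive process, which is exactly where countability of $k$ is used, as in \cite{LS,LSY}. Since $V$ is countable-dimensional, the obstructions to simplicity form a countable list of conditions of the form ``some element of $A$ must carry $v_n$ to a fixed cyclic generator of $V$''; at stage $n$ one imposes just enough new relations to meet the $n$-th condition while (i) never collapsing $R$, which preserves faithfulness, (ii) keeping $y_1,\dots,y_t$ and the finitely many new words produced so far algebraic over $k$, and (iii) adding only a bounded amount of new growth. The relations one imposes should rewrite every sufficiently long word in the generators as a sum of shorter words with coefficients in $R$, so that $A$ is spanned by $R^1\cdot W\cdot R^1$ for a fixed finite set $W$ of words; this normal form forces algebraicity of all of $A$ from algebraicity of $R^1$, and it yields $\dim V_n \le C\cdot\dim(R^1_{\le n})^2$ for the standard filtration of $V$, whence $\mathrm{GKdim}(A)\le 2\cdot\mathrm{GKdim}(R)\le 6$ — this is the origin of the bound in the statement. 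Finally $A$ is infinite-dimensional because $R\subseteq A$, and $A$ is primitive because $V$ is a faithful simple $A$-module.

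The main obstacle is running the inductive step of the previous paragraph with all constraints active at once: one must force enough new relations to break every proper submodule of $V$ (condition (a)) yet not so many that one loses control of either the algebraicity of arbitrary products in $A$ (condition (c)) or the quadratic growth of $V$ relative to $R^1$ (condition (d)). Achieving this balance is the heart of the matter, and it is precisely the nilpotence together with the \emph{explicit} polynomial growth of the input algebra $R$ of \cite{LSY} that makes it feasible.
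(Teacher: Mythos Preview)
Your proposal is a research plan, not a proof. The paper's argument is entirely different and avoids precisely the obstacle you yourself identify as ``the heart of the matter.''

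The paper does \emph{not} try to adjoin operators to the LSY algebra by an inductive process that simultaneously forces simplicity, algebraicity, and bounded growth. Instead it separates the three issues by an affinization trick. One takes a prime homomorphic image $B$ of the LSY nil algebra (so $B$ is finitely generated, algebraic, prime, $\mathrm{GKdim}(B)\le 3$) and, independently, a countably generated \emph{locally finite} primitive algebra $T$ (from \cite{BS}), which has $\mathrm{GKdim}(T)=0$. Then a $2\times 2$ matrix construction over a free product $B\star k[y]$ produces a finitely generated prime algebra $A$ containing an idempotent $e$ with $eAe\cong T$ and $A/AeA$ a quotient of $B$, together with the growth estimate $\mathrm{GKdim}(A)\le 2\,\mathrm{GKdim}(B)+\mathrm{GKdim}(T)\le 6$. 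Primitivity is then a one-line citation (a prime ring with a primitive corner is primitive, \cite{LRS}); algebraicity follows because $AeA$ is a locally finite ideal (since $T$ is locally finite) and $A/AeA$ is algebraic.

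Your sketch has two genuine gaps. First, the inductive step is simply asserted: you do not exhibit relations that make $V$ simple while keeping the $R$-action faithful and all elements algebraic. You acknowledge this is the hard part and do not do it. Second, your algebraicity argument is wrong as stated: knowing that $A=R^1\cdot W\cdot R^1$ for a finite set $W$ and that $R^1$ is algebraic does \emph{not} imply that every element of $A$ is algebraic. The span $R^1WR^1$ is infinite-dimensional, so there is no automatic linear dependence among the powers of a given $a\in A$; one would need something much stronger, e.g.\ that the ideal generated by the new generators is locally finite, which is exactly what the paper arranges via the corner $eAe\cong T$ being locally finite. Without such a structural mechanism your step from ``normal form'' to ``algebraic'' fails.
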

In order to do this construction, we use an (unpublished) affinization construction due to the second-named author, along with growth estimates of the first-named author \cite{B}, and the construction of Lenagan, \emph{et al.} \cite{LSY}.

In addition to this, we also consider the question of whether or not there are primitive algebra algebras that are two-generated.  We note that the algebra constructed in Theorem \ref{thm: main}, while finitely generated, needs several generators.  It is natural to consider whether one can construct such algebras with fewer generators.  We are able to prove the following result.
\begin{thm} \label{thm: main2}  Let $k$ be a countable field.  Then there is an infinite-dimensional primitive algebraic $k$-algebra that is generated by two
 elements.
 \end{thm}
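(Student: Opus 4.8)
\emph{The plan} is to lower the number of generators by a matrix amplification. Start from a finitely generated, infinite-dimensional, primitive, algebraic $k$-algebra $A$ --- for instance the algebra produced in Theorem~\ref{thm: main}, or the one of \cite{BS} --- and assume $A$ is generated as a $k$-algebra by $a_1,\dots,a_n$ (we may take $A$ to be unital). Put $B=M_{n+1}(A)$. Then $B$ is infinite-dimensional, and $B$ is primitive because primitivity is a Morita invariant: if $M$ is a faithful simple $A$-module, then $M^{\,n+1}$ (column vectors, with $B$ acting by matrix multiplication) is a faithful simple $B$-module. It will remain to see that $B$ is generated by two elements and --- this is the delicate point --- that $B$ is algebraic over $k$.

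For the two-generator assertion I would use the standard fact that $M_{n+1}(R)$ is two-generated whenever $R$ is $n$-generated. Writing $e_{ij}$ ($1\le i,j\le n+1$) for the matrix units, consider
\[
E=\sum_{i=1}^{n}e_{i,i+1}\,,\qquad F=e_{n+1,1}+\sum_{i=1}^{n}a_i\,e_{i,n+1}\ \in\ M_{n+1}(A).
\]
One checks in turn that $E^{\,n}F=e_{11}$; that $e_{11}$ and $E$ yield every $e_{1m}=e_{11}E^{\,m-1}$; that $Fe_{11}=e_{n+1,1}$, so that applying powers of $E$ on the left one obtains every $e_{jm}$; that $e_{ii}F=a_i\,e_{i,n+1}$, so that $a_i e_{11}=e_{1i}(a_i e_{i,n+1})e_{n+1,1}$ lies in the subalgebra for each $i$; and finally, by multiplying such elements, that $\rho\,e_{jm}$ lies there for every word $\rho$ in the $a_i$ and all $j,m$. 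Passing to $k$-linear combinations one recovers all of $M_{n+1}(A)$, so $B=k\langle E,F\rangle$.

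The main obstacle is algebraicity, and here the structure of $A$ has to be exploited: it is \emph{not} true in general that $A$ algebraic forces $M_{n+1}(A)$ algebraic --- already the case $n=1$ with $A$ nil is equivalent to an instance of K\"othe's conjecture --- so a soft argument is not available. The algebras in question are realized as algebras of $k$-linear operators on an $\mathbb{N}$-graded vector space $W$, and their algebraicity is proved by a combinatorial ``block'' estimate of the kind used in \cite{LS,LSY}. The point is that $M_{n+1}(A)$ is exactly the corresponding algebra of operators on $W\otimes_k k^{\,n+1}$, which carries the same graded structure with every homogeneous component enlarged by the fixed factor $n+1$; the plan is to rerun that block estimate for $W\otimes_k k^{\,n+1}$ and check that it survives with the relevant constants scaled by $n+1$, yielding that $M_{n+1}(A)$ is algebraic. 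I expect this transfer to be the only substantial step. One should also be prepared to feed in for $A$ a sufficiently robust primitive algebraic algebra, even if this forces the loss of any bound on Gelfand-Kirillov dimension --- matrix amplification does not change that dimension, and the fact that Theorem~\ref{thm: main2} is stated with no growth hypothesis suggests that the algebraicity estimate cannot be pushed through for the finely tuned algebra of Theorem~\ref{thm: main}.

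\emph{In summary:} once $B=M_{n+1}(A)$ has been seen to be two-generated and algebraic, it is the required infinite-dimensional primitive algebraic $k$-algebra, which proves Theorem~\ref{thm: main2}.
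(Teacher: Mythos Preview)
Your approach is genuinely different from the paper's, and it stands or falls on the algebraicity of $M_{n+1}(A)$, which you do not actually establish. The two-generator argument and the primitivity argument are fine. But your description of how the algebraicity of $A$ is proved is inaccurate: the algebra of Theorem~\ref{thm: main} is not itself obtained by a block estimate on a graded space. It is the affinization $\mathcal{A}(T,B,\Phi)$, and its algebraicity comes from the extension structure: the ideal $AeA$ is locally finite (since $eAe\cong T$ is locally finite) and $A/AeA$ is a homomorphic image of a nil algebra $B$ arising from \cite{LSY}. When you pass to $M_{n+1}(A)$, the ideal $M_{n+1}(AeA)$ is still locally finite, so everything reduces to showing that $M_{n+1}(A/AeA)$ is algebraic; writing $A/AeA=k+J$ with $J$ nil, this is exactly the statement that $M_{n+1}(J)$ is nil. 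That is a K\"othe-type statement for this particular nil ring, as you acknowledge. Your proposed fix --- ``rerun the block estimate on $W\otimes k^{\,n+1}$'' --- is not a proof: the estimates of \cite{LSY} are calibrated to force a \emph{single} element to be nilpotent, while the powers of a matrix over $J$ mix many elements at once, and controlling such sums is precisely what the K\"othe problem is about. It may be possible to push this through for the specific \cite{LSY} algebra, but that would be a theorem in its own right, not a routine scaling of constants, and nothing in your proposal does it. The same objection applies if you start from the algebra of \cite{BS}, which is built by the same affinization mechanism.

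The paper avoids this obstacle altogether by a second, purpose-built affinization that produces a two-generator algebra directly. One constructs a prime $k$-algebra $A$ generated by $x,y$ with $y^{3}=y^{2}$, so that $y^{2}$ is idempotent; the construction arranges $y^{2}Ay^{2}\cong T$ for a locally finite primitive unital algebra $T$, and $A/(y^{2})$ a homomorphic image of a two-generated algebra $B=k\{x,y\}/(y^{2},I)$ whose augmentation ideal is nil (this $B$ is obtained by a small modification of the \cite{LSY} construction, choosing the initial data so that $y^{2}$ lies in the ideal $E$). Primitivity of $A$ follows from the primitive corner $y^{2}Ay^{2}$ via \cite{LRS}, and algebraicity follows because $(y^{2})$ is a locally finite ideal and $A/(y^{2})$ is algebraic. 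No matrix amplification is used, so no K\"othe-type step appears.
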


The outline of the paper is as follows.  In \S 2, we define Gelfand-Kirillov dimension and describe some basic facts about it.
In \S 3, we give a very general affinization construction, which allows one to construct finitely generated algebras with many different properties.
 In \S 4, we apply our construction and use the construction of Lenagan \emph{et al.} to prove Theorem \ref{thm: main}.
 In \S 5 we describe another affinization construction and prove Theorem \ref{thm: main2}. Finally, in \S 6 we pose some open problems which are related to our investigations.
\section{Gelfand-Kirillov dimension}
In this section, we define Gelfand-Kirillov dimension and give some basic facts about it.
Given a field $k$ and a finitely generated $k$-algebra $A$, the \emph{Gelfand-Kirillov dimension} of $A$ (GK dimension, for short) is defined to be
$${\rm GKdim}(A) \ := \ \limsup_{n\rightarrow\infty}~ \log\big({\rm dim}~V^n\big)\big/\log n,$$
where $V$ is a finite dimensional $k$-subspace of $A$ which
contains the identity of $A$ and which generates $A$ as an
$k$-algebra.  As it turns out, Gelfand-Kirillov dimension is
independent of choice of $V$ \cite[pp. 6]{KL}.  In the case that
$A$ is not finitely generated, the GK dimension of $A$ is defined
to be the supremum of the GK dimensions of finitely generated
subalgebras of $A$.

We give some of the basic properties about Gelfand-Kirillov dimension.
\begin{prop} Gelfand-Kirillov dimension has the following properties:
\begin{enumerate}
\item the Gelfand-Kirillov dimension of a finitely generated
    commutative algebra is the same as its Krull dimension
    \cite[pp. 39]{KL};
\item there are no algebras whose GK dimension is strictly
    between $0$ and $1$;
\item there are no algebras whose GK dimension is strictly
    between $1$ and $2$ \cite[pp. 18]{KL};
\item for every $\alpha\ge 2$ there exists a finitely
    generated algebra of GK dimension $\alpha$ \cite[pp.
    162]{KL};
\item if $A$ and $B$ are algebras such that $B$ is either a
    finite left or right $A$-module, then ${\rm GKdim}(A)={\rm
    GKdim}(B)$ \cite{BK};
\item if $A$ and $B$ are two algebras then ${\rm
    GKdim}(A\otimes B)\le {\rm GKdim}(A)+{\rm GKdim}(B)$
    \cite[pp. 28]{KL}.
\end{enumerate}
\end{prop}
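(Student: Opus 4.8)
The plan is to dispatch the six items separately, since several are immediate from the definition and the remaining ones are deep enough that I would simply invoke the cited literature. For item (6), I would fix finite-dimensional generating subspaces $V \ni 1_A$ and $W \ni 1_B$, note that $V\otimes W$ generates $A\otimes B$ and that $(V\otimes W)^n = V^n \otimes W^n$, so $\dim_k (V\otimes W)^n = \dim_k V^n \cdot \dim_k W^n$; dividing logarithms by $\log n$ and using that the $\limsup$ of a sum is at most the sum of the $\limsup$s yields the stated inequality. For item (2), if $A$ is infinite-dimensional and $V$ is a generating subspace containing the identity, then $V^n \subsetneq V^{n+1}$ for every $n$ (equality would force $A = \bigcup_m V^m = V^n$ to be finite-dimensional), hence $\dim_k V^n \ge n+1$ and ${\rm GKdim}(A) \ge \limsup_n \log(n+1)/\log n = 1$; together with the elementary observation that ${\rm GKdim}(A)=0$ exactly when $A$ is finite-dimensional, this excludes all values in the open interval $(0,1)$.

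For item (1), I would invoke Noether normalization: a finitely generated commutative $k$-algebra $A$ of Krull dimension $d$ is a finite module over a polynomial subring $k[x_1,\dots,x_d]$; the polynomial ring has GK dimension $d$ because the monomial count gives $\dim_k V^n = \binom{n+d}{d}$, a polynomial in $n$ of degree exactly $d$, and then item (5) upgrades this to ${\rm GKdim}(A)=d$. Item (5) itself I would not reprove: the two-sided-module case is a routine argument comparing Veronese-type subspaces, but the one-sided refinement stated here is precisely the content of \cite{BK}, so I would cite it. Likewise, for item (4) I would recall the standard construction of monomial algebras on a two-letter alphabet with prescribed Hilbert series realizing an arbitrary real growth exponent $\alpha \ge 2$, as in \cite[Ch.~12]{KL}, and point to that rather than reconstruct it.

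The one genuinely hard ingredient is item (3), Bergman's gap theorem — that no algebra has GK dimension strictly between $1$ and $2$. I would not attempt this here: it is a delicate combinatorial analysis of the growth function showing that subquadratic growth of $\dim_k V^n$ forces linear growth, and I would defer entirely to \cite[pp.~18]{KL}. So the proof of the Proposition reduces to bookkeeping: supply the two-line arguments for (2) and (6), derive (1) from Noether normalization together with (5), and cite the literature for (3), (4), and (5). The only place care is needed is the convention that all generating subspaces contain the identity — this is what makes the filtration monotone, $V^n \subseteq V^{n+1}$, which is used in item (2) and tacitly throughout — so I would state that convention explicitly at the outset.
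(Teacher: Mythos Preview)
Your proposal is correct. In fact, the paper provides no proof at all for this proposition: each item is simply stated with a citation to \cite{KL} or \cite{BK}, and the reader is referred to Krause--Lenagan for details. Your approach goes further than the paper by actually supplying the short arguments for items (2) and (6) and deriving (1) from Noether normalization combined with (5), while agreeing with the paper in deferring (3), (4), and (5) to the literature. One minor point: your arguments for (2) and (6) are written for finitely generated $A$ and $B$; the general case follows immediately from the definition of GK dimension as a supremum over finitely generated subalgebras, and it would be worth saying so explicitly.
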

We refer the reader to the book of Krause and Lenagan \cite{KL} for additional facts about Gelfand-Kirillov dimension.

\section{Affinization}
In this section, we describe a general affinization construction.  This construction takes a countably generated algebra $T$ over a field $k$ and builds a finitely generated $k$-algebra $A$ with the property that $eAe\cong T$ for some idempotent $e$ of $A$.  Our construction has the additional property that we can bound the Gelfand-Kirillov dimension of the algebra $A$ in terms of the Gelfand-Kirillov dimension of $T$.

\begin{notn} \label{notn: 1}
Throughout this section, we fix the following notation:
\begin{enumerate}
\item we let $k$ be a field;
\item we let $T$ be a prime, countably generated $k$-algebra;
\item we let $B$ be a prime infinite-dimensional finitely generated $k$-algebra;
\item we let $R=B\star k[y]$, the free product of $B$ and $k[y]$ in the category of finitely generated $k$-algebras;
\item we let
\[ S \ =\  \left( \begin{array}{cc} k + Ry & R \\ Ry & R \end{array}
\right). \]
\end{enumerate}
\end{notn}
The ring $S$ is generated as a $k$-algebra by
\[ \left( \begin{array}{cc} 0 & 0 \\ 0 & c \end{array}\right),
\left( \begin{array}{cc} 1 & 0 \\ 0 & 0 \end{array}\right),
\left( \begin{array}{cc} 0 & 1 \\ 0 & 0 \end{array} \right),
~{\rm and}~ \left( \begin{array}{cc} 0 & 0 \\ y & 0 \end{array}
\right), \] where $c$ ranges over elements in a finite generating set for the $k$-algebra $B$.
Hence $S$  is a finitely generated $k$-algebra.

Given these data, we show that we can construct a finitely generated prime $k$-algebra $A$ with the following properties:
\begin{enumerate}
\item $A$ has an idempotent $e$ such that
$$eAe\cong T;$$
\item $A/AeA$ is a homomorphic image of $B$;
\item ${\rm GKdim}(A)\le 2{\rm GKdim}(B)+{\rm GKdim}(T).$
\end{enumerate}
We call such a ring an \emph{affinization} of $T$ with respect to $B$.

To describe this construction, note that $k+Ry$ is
a free $k$-algebra on the infinitely many generators $\{ cy~ |~ c\in \mathcal{C}\}$, where $\mathcal{C}$ is a basis for $B$ as a $k$-vector space.
It follows that we have a surjective ring homomorphism
\begin{equation}
\Phi:
k+Ry\  \rightarrow\
T. \label{eq: Phi}
\end{equation}
  Let
\begin{equation}
P\ =\  {\rm ker}(\Phi)
\label{eq: P}
\end{equation}
and let
$e_{i,j}$ denote the matrix with a $1$ in the $(i,j)$-entry and zeros
everywhere else.  Notice $P$ is
a prime ideal. Observe that
$Q' := S(Pe_{1,1})S$ satisfies $e_{1,1}Q'e_{1,1}=P$.   Using Zorn's lemma we can
choose an ideal $Q$ in $S$ maximal with respect to the property that
\begin{equation} e_{1,1}
Q e_{1,1} \ =\
\left( \begin{array}{cc} P & 0 \\ 0 & 0 \end{array} \right).
\label{eq: Q} \end{equation}
By maximality, we have that $Q$ is prime.  Throughout this paper, we let
$\overline{e_{i,j}}$ denote the image of $e_{i,j}$ is $S/Q$.

Observe that $Q$ is in fact uniquely determined.
To see this, suppose that $Q'$ is another such ideal.  Then
$$e_{1,1}(Q+Q')e_{1,1} \ = \ e_{1,1}Qe_{1,1} + e_{1,1}Q'e_{1,1} \ = \ P$$  By maximality of $Q$
and $Q'$ we have that $Q=Q+Q'=Q'$ and so $Q=Q'$.  We note that
\begin{equation}
 Q \ \supseteq\
 \left( \begin{array}{cc} 0 & 0 \\ Ry & 0 \end{array}\right)
\left( \begin{array}{cc} P & 0 \\ 0 & 0 \end{array}\right)
\left( \begin{array}{cc} 0 & 0 \\ 0 & R \end{array}\right)\  = \
\left( \begin{array}{cc} 0 & 0 \\ 0 & RyPR \end{array}\right).
 \label{eq: e22}\end{equation}
Similarly, \begin{equation} Q \ \supseteq\
 PRe_{1,2},\label{eq: e12}\end{equation}
and
\begin{equation} Q\ \supseteq\  \label{eq: e21} RyPe_{2,1}. \end{equation}
The algebra $S/Q$ has the property that
\begin{equation}
\overline{e_{1,1}}(S/Q)\overline{e_{1,1}}\ \cong\  T.
\end{equation}
We call $S/Q$ the \emph{affinization of} $T$ \emph{with respect to} $\Phi$ and $B$,
 and we denote it
by $\mathcal{A}(T,B,\Phi)$.  Since the prime ideal $Q$ is uniquely determined by $\Phi$,
the algebra $\mathcal{A}(T,B,\Phi)$ is uniquely determined by $T$, $B$, and $\Phi$.

Observe that if $e$ denotes the image of $e_{1,1}$ in $S/Q=\mathcal{A}(T,B,\Phi)$,
then by construction we have:
\begin{enumerate}
\item $\mathcal{A}(T,B,\Phi)$ is prime;
\item $e\mathcal{A}(T,B,\Phi)e\cong T$;
\item $\mathcal{A}(T,B,\Phi)/AeA$ is a homomorphic image of
    $B$.
\end{enumerate}
We now
 prove our main result of this section.

\begin{prop}
\label{thm: main1} Assume the notation given in Notation \ref{notn: 1}.
There exists a homomorphism
$\Phi : k +Ry \rightarrow T$ such that $\mathcal{A}(T,B,\Phi)$ has
Gelfand-Kirillov dimension at most $2{\rm GKdim}(B)+{\rm GKdim}(T)$.
\end{prop}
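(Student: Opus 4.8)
The plan is to choose the homomorphism $\Phi$ of \eqref{eq: Phi} so that the generators of $T$ get ``hit slowly'', and then to bound the growth of $\mathcal{A}(T,B,\Phi)=S/Q$ directly, splitting a word-length filtration of $S/Q$ along the Peirce decomposition attached to the orthogonal idempotents $\overline{e_{1,1}}$ and $\overline{e_{2,2}}$. Throughout write $\beta={\rm GKdim}(B)$ and $\gamma={\rm GKdim}(T)$, assuming $\gamma<\infty$ (otherwise there is nothing to prove). I would fix a finite generating set for $B$, let $B_{\le n}$ denote the span of the words of length $\le n$ in it, and set $d(n)=\dim_k B_{\le n}$, so that $\limsup_n \log d(n)/\log n=\beta$; since $B$ is infinite-dimensional, $d(n)\to\infty$ and $\mathcal{C}$ contains words of arbitrarily large length. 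I would take the basis $\mathcal{C}$ of $B$ used in \eqref{eq: Phi} to consist of words, chosen in a filtered fashion so that $\mathcal{C}\cap B_{\le n}$ is a basis of $B_{\le n}$ for every $n$.

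First I would construct $\Phi$. Fix a countable generating set $t_1,t_2,\dots$ of $T$, and for each $i$ put $T_{(i)}=k\langle t_1,\dots,t_i\rangle$, $V_{(i)}=k\cdot 1+kt_1+\dots+kt_i$. Since $T_{(i)}$ is a finitely generated subalgebra of $T$ we have ${\rm GKdim}(T_{(i)})\le\gamma$, so there is an integer $N_i$ with $\dim_k V_{(i)}^{\,m}\le m^{\gamma+1/i}$ for all $m\ge N_i$, and we may assume $N_1\le N_2\le\cdots$. Now pick distinct elements $c_{(1)},c_{(2)},\dots$ of $\mathcal{C}$ with ${\rm length}(c_{(i)})\ge N_i$, and define $\Phi$ on the free generators of $k+Ry$ by $\Phi(c_{(i)}y)=t_i$ for all $i$ and $\Phi(cy)=0$ for every other $c\in\mathcal{C}$. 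Then ${\rm im}(\Phi)$ is the subalgebra of $T$ generated by $1$ and the $t_i$, hence all of $T$; so $\Phi$ is surjective and $P={\rm ker}(\Phi)$ as in \eqref{eq: P}. The role of the length condition is that if $b\in B_{\le n}$ then $by\in{\rm span}\{cy:c\in\mathcal{C}\cap B_{\le n}\}$, and every $c_{(i)}$ occurring there satisfies $N_i\le{\rm length}(c_{(i)})\le n$; hence $\Phi(by)\in V_{(J(n))}$, where $J(n):=\max\{i:N_i\le n\}$, and $J(n)\to\infty$ as $n\to\infty$.

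Next I would estimate $\dim_k W^n$, where $W$ is the image in $\mathcal{A}(T,B,\Phi)$ of the standard generating subspace $V=k\cdot 1+ke_{1,1}+ke_{1,2}+k(e_{2,1}y)+\sum_i k(e_{2,2}c_i)$ of $S$. A product of $\le n$ of the four listed generators of $S$ is either $0$ or of the form $e_{a,b}w$, where $w$ is a word of length $\le n$ in the $c_i$ and in $y$; a short bookkeeping of the matrix coordinates (the ``excursions'' between the two columns) shows that, modulo trivial factors, such a monomial lies in the block $\overline{e_{a,a}}(S/Q)\overline{e_{b,b}}$ in one of the shapes $\overline{e_{1,1}}\,\pi$, $\overline{e_{1,2}}\,\pi b$, $\overline{e_{2,1}}\,(b_0y)\pi$, or $\overline{e_{2,2}}\,(b_0y)\pi b$, where $b,b_0$ are words of length $\le n$ in the $c_i$ (possibly empty) and $\pi$ is a product of at most $n$ elements of $\{b'y:b'\in B_{\le n}\}$; by the previous paragraph $\Phi(\pi)\in V_{(J(n))}^{\,n}$. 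Modulo $Q$ one may replace $\pi$ by a lift of $\Phi(\pi)$: if $\Phi(\pi)=\Phi(\pi')$ then $\pi-\pi'\in P$, and left multiplication by $b_0y\in Ry$ and right multiplication by $b\in B\subseteq R$ put $\pi-\pi'$ into $PR$, $RyP$, or $RyPR$, so the resulting error lies in $Q$ by $Pe_{1,1}\subseteq Q$ (from \eqref{eq: Q}), \eqref{eq: e12}, \eqref{eq: e21}, or \eqref{eq: e22}, respectively. Consequently each block of $W^n$ is spanned modulo $Q$ by products indexed by a choice of $b_0\in\mathcal{C}\cap B_{\le n}$ or none ($\le d(n)$ choices), a basis vector of $V_{(J(n))}^{\,n}$, and a choice of $b\in\mathcal{C}\cap B_{\le n}$ or none; hence there is a constant $C$ with $\dim_k W^n\le C\,d(n)^2\,\dim_k V_{(J(n))}^{\,n}$ for all $n$.

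Finally, $n\ge N_{J(n)}$ by definition of $J(n)$, so $\dim_k V_{(J(n))}^{\,n}\le n^{\gamma+1/J(n)}$, and combining this with $\dim_k W^n\le C\,d(n)^2\,\dim_k V_{(J(n))}^{\,n}$ gives $\log\dim_k W^n/\log n\le 2\cdot\frac{\log d(n)}{\log n}+\gamma+\frac1{J(n)}+o(1)$. Taking $\limsup$ over $n$ and using $\limsup_n\log d(n)/\log n=\beta$ and $J(n)\to\infty$ yields ${\rm GKdim}(\mathcal{A}(T,B,\Phi))\le 2\beta+\gamma=2{\rm GKdim}(B)+{\rm GKdim}(T)$. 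I expect the main obstacle to be the uniformity hidden in this last chain: a fixed finitely generated subalgebra $T_{(i)}$ of $T$ only obeys the polynomial bound $\dim_k V_{(i)}^{\,m}\le m^{\gamma+1/i}$ \emph{eventually}, past a threshold $N_i$ that may grow arbitrarily fast with $i$, whereas in the growth estimate one must evaluate such a quantity at the moving index $J(n)\to\infty$. This is precisely what forces the recursive choice ${\rm length}(c_{(i)})\ge N_i$, which guarantees that before a word of $S$-length $n$ can involve $t_i$ at all one already has $n\ge N_i$, so the eventual bound applies with an exponent $\gamma+1/i\le\gamma+1/J(n)$ that tends to $\gamma$. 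A secondary point to check carefully is the legitimacy of the substitution of a lift of $\Phi(\pi)$ for $\pi$ in each block, which is exactly where the containments \eqref{eq: e22}, \eqref{eq: e12}, and \eqref{eq: e21} are used.
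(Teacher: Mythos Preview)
Your proposal is correct and follows essentially the same strategy as the paper: choose $\Phi$ so that the $i$th generator of $T$ is only reached by $B$-words of length at least a threshold $N_i$ guaranteeing the eventual polynomial bound $\dim V_{(i)}^m\le m^{\gamma+1/i}$, then bound the growth of $S/Q$ via the Peirce decomposition, using \eqref{eq: Q}--\eqref{eq: e21} to reduce each block to $\Phi$-images and flanking $B$-factors. The only cosmetic differences are that the paper takes the $u_j$ to be a basis of $T$ rather than a generating set, and packages the four-block estimate as ``diagonal $\to$ upper-triangular $\to$ full'' via \cite[Lemma~4.3]{KL}; your direct block count (noting that the $(2,2)$-shape should also allow the degenerate case $b$ alone, which contributes only $d(n)$) gives the same bound.
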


\begin{proof} We let $\alpha$ and $\beta$ denote respectively the Gelfand-Kirillov dimension of $T$ and $B$.  If $\alpha$ or $\beta$ is infinite, there is nothing to prove, thus it is no loss of generality to assume that $\alpha,\beta<\infty$.

Let $P$ and $Q$ be as in equations (\ref{eq: P}) and
(\ref{eq: Q}).  Let
$W_0$ be a finite-dimensional subspace of $B$ that contains $1_B$ and generates $B$ as a $k$-algebra, let
 \begin{equation}
W \ =\  W_0+ky\ \subseteq \ R,
\end{equation}
and let
$V\subseteq S$ be the generating subspace of $S$ given by
\[ V \ \subseteq\  \left( \begin{array}{cc} k
+Wy & W \\ Wy & W
 \end{array}
\right). \]

We have
\[ V^n \ \subseteq\  \left( \begin{array}{cc} k
+W^{n}y & W^{n} \\ W^{n}y & W^n
 \end{array}
\right). \]
We shall construct a homomorphism that will give an affinization with the desired upper bound on the Gelfand-Kirillov dimension.  Let $\EuScript{B}=\{1, u_1,u_2\ldots \}\subseteq T$
be a basis for
$T$ as a $k$-vector
space.

For each $j\ge 1$, define $U_j$ to be the vector space spanned by the
first $j+1$ elements of $\EuScript{B}$; that is,
\begin{equation} U_j\ = \ k+ku_1+\cdots +ku_j.\end{equation}
Since $T$ has GK dimension $\alpha$, we have
$$\limsup_{n\rightarrow\infty}
~ \log \bigl({\rm dim}~ (U_j)^n\bigr)\Big/\log n\  \le \  \alpha.$$
Hence there exists a positive integer
$m_j$ such that $${\rm dim}~ (U_j)^n\  <\  n^{\alpha+1/j}$$
 for all $n\ge m_j$.  By increasing $m_j$ if necessary, we may assume
that $m_j\ge m_{j-1}$ for all $j\ge 2$.
Pick $v_j\in W_0^{m_j}\setminus W_0^{m_j-1}$ for each $j\ge 1$.  By construction, $v_1,v_2,\ldots $ are linearly independent and thus can be extended to a basis $\mathcal{C}$ for $B$.
We define $$\Phi: k+Ry \   \rightarrow\  T$$ by
\[ \Phi\big(cy\big) \ =\
 \left\{ \begin{array}{ll} u_j & {\rm if~}c=v_{j}~{\rm for~some
~}j\ge 1, \\
0 & {\rm if~}c\in \mathcal{C}\setminus \{v_1,v_2,\ldots \}, \end{array} \right. \]
and we extend by linearity.

Consider
\[ {\rm dim} \left( \begin{array}{cc} k+ W^n y & 0 \\ 0 & 0 \end{array}
\right). \]
Let $\varepsilon > 0$.  Then there exists some number $j_0$ such that
$${\rm dim}(W^n)<n^{\beta+\varepsilon}$$ for all $n\ge m_{j_0}$.

Suppose $m_{j} \le n < m_{j+1}$ for some $j\ge j_0$.  Then since $Pe_{1,1}= e_{1,1}Qe_{1,1}$,
an element of $W^ny\overline{e_{1,1}}$ is determined by its behavior modulo $Pe_{1,1}$.
Since $n<m_{j+1}$, we have $$\Phi\big(W^ny\big)\ \subseteq \  (U_j)^n.$$
Hence $${\rm dim}~ W^ny\overline{e_{1,1}}\  \le\  {\rm dim}~ (U_j)^n
\ \le\  n^{\alpha + 1/j}.$$

  We now compute the dimension of $W^n\overline{e_{2,2}}$.  Notice that any element of
$R$
can be expressed as a linear combination of elements of $\mathcal{C}$, elements of the form $c_1yc_2$, with $c_1,c_2\in \mathcal{C}$, and elements
of the form
$c_1ywyc_2$, where $w$ is a word over the alphabet $\mathcal{C}\cup \{y\}$, and $c_1,c_2\in \mathcal{C}$.

Hence
anything in $W^n$ is contained in
$$W_0^n+ W_0^nyW_0^n+
W_0^nyW^nyW_0^n.$$  Thus
$${\rm dim}(W^n)\le {\rm dim}(W_0^n) + \left({\rm dim}(W_0^n)\right)^2 +
 \left({\rm dim}(W_0^n)\right)^2 \cdot {\rm dim}(W^ny).$$

Observe that $RyPRe_{2,2}\subseteq Q$ and hence the image in $\overline{e_{2,2}}\mathcal{A}(T,B,\Phi)
\overline{e_{2,2}}$ of an element of the form
$c_1ywyc_2e_{2,2}$, with $c_1,c_2\in \mathcal{C}$ and $w$ a word over the alphabet $\mathcal{C}\cup \{y\}$, is completely
determined by the behavior of $wy$ mod $P$.
As $\Phi\big(W^ny\big)\subseteq (U_j)^n$, we have for $n\ge m_j$,
$${\rm dim}~W_0^nyW^nyW_0^n \overline{e_{2,2}}\ \le\
 {\rm dim}(U_j)^n{\rm dim}(W_0^n)^2 \  \le\  n^{\alpha+2\beta+2\varepsilon+1/j}.$$  Since
Thus
$${\rm dim}~ W^n\overline{e_{2,2}} \ \le\  n^{\beta+\varepsilon} + n^{2\beta+2\varepsilon}+n^{\alpha+2\beta+\varepsilon+1/j} \ =\  {\rm O}(n^{2\beta+\alpha+2\varepsilon+1/j}).$$

As $j\to\infty$ as $n\to \infty$ and $\varepsilon>0$ is arbitrary, we see that
$${\rm dim}~ W^n\overline{e_{2,2}} = {\rm O}(n^{2\beta+\alpha+\varepsilon})$$ for every $\varepsilon>0$

Let $D$ denote the ``diagonal'' of $\mathcal{A}(T,B,\Phi)$ and let
$C$ denote the ``upper-triangular part'' of
$\mathcal{A}(T,B,\Phi)$.  We have just shown
that
$$D\ \cong \ \big((k+Ry)/P\big)\oplus \big(R/RyPR\big)$$
 has GK dimension at most
$\alpha +2\beta+\varepsilon$.

  Observe that $C=D+\overline{e_{1,2}}D$ and hence $B$ has GK dimension
at most
$\alpha+2\beta+\varepsilon$ \cite[Lemma 4.3]{KL}.  Finally, note that
$$\mathcal{A}(T,B,\Phi)\ =\ B+B(y\overline{e_{2,1}})$$ and thus
$$\mathcal{A}(T,B,\Phi)$$ has GK dimension at most $\alpha+2\beta+\varepsilon$,
\cite[Lemma 4.3]{KL}.
 Since $\varepsilon>0$ is arbitrary, we conclude that $\mathcal{A}(T,B,\Phi)$
has GK dimension at most $\alpha+2\beta$.

 \end{proof}
 \section{Algebraic algebras}
 In this section we prove Theorem \ref{thm: main}
 \begin{proof}[Proof of Theorem \ref{thm: main}.]
 Note that Lenagan, Smoktunowicz, and Young \cite{LSY} have shown that one can construct a finitely generated
 infinite-dimensional algebraic algebra $A$ of Gelfand-Kirillov dimension at most three over any countable field
 $k$. We note that this algebra has a prime infinite-dimensional homomorphic image $B$.
 Indeed, the prime radical (the intersection of all prime ideals) of an arbitrary algebra is always locally
 nilpotent and hence has Gelfand-Kirillov dimension zero.  Consequently the prime radical doesn't equal the whole algebra $A$.
  Note that finitely dimensional nil algebras are nilpotent,
  therefore $A/I$ is infinite dimensional for any prime ideal in
  $A$.
   The algebra $B$ is necessarily algebraic, finitely generated, and has  Gelfand-Kirillov dimension at most three.

We let $T$ be a countably generated infinite-dimensional primitive
$k$-algebra of Gelfand-Kirillov dimension zero. We note that an
example of such an algebra is given by the first- and second-named
authors \cite{BS}.

By Proposition \ref{thm: main1}, there exists a prime finitely generated $k$-algebra $A$ with the following properties:
\begin{enumerate}
\item ${\rm GKdim}(A)\le 2{\rm GKdim}(B)+{\rm GKdim}(T)\le 6$;
\item there is an idempotent $e\in A$ such that $eAe\cong T$;
\item $A/AeA$ is a homomorphic image of $B$.
\end{enumerate}
The second property gives that $A$ as primitive \cite[Theorem 1]{LRS}, as $A$ is a prime ring with a primitive corner.

We next claim that $AeA$ is a locally finite two-sided ideal of $A$.  To see this, note that any finite-dimensional subspace of $AeA$ is contained in a subspace of the form $W e W$ for some finite-dimensional subspace of $A$.

Then $$\left( W eW\right)^m \subseteq W(e W^{2}e)^{m-2} e W.$$
As $eAe\cong T$, we see that $e W^{2}e\cong W'$ for some finite-dimensional subspace $W'$ of $T$.  As $T$ is locally finite, we have that $(W')^p = (W')^{p+1}$ for some natural number $m$ and hence $$\left( W^n eW^n\right)^{p+2}=\left( W^n eW^n\right)^{p+3},$$ giving that $AeA$ is locally finite.

Since $A/AeA$ is a homomorphic image of $B$, it is algebraic and
$AeA$ is a locally finite two-sided ideal, we see that $A$ is
algebraic. The result follows. \end{proof}

\section{Affinization with two generators}

 In this section, we briefly describe another affinization construction. This construction
is a generalization of a construction of Markov \cite{Mar}.  We rely heavily on both the ideas and notation from the recent construction of Lenagan,
Smoktunowicz, and Young \cite{LSY}.  Using these ideas we are able to construct an infinite-dimensional primitive algebraic algebra generated by just two elements.

 We point out that our construction only works over countable fields.
    \begin{notn} \label{notn: 2}
Throughout this section, we fix the following notation:
\begin{enumerate}
\item we let $k$ be a countable field;
\item we let $T$ be a prime, countably generated $k$-algebra
    with unity;
\item  let $k\{x,y\}$ denote the free $k$-algebra on two generators;
\item we let $B$ denote an infinite-dimensional $k$-algebra of the form $B=k\{x,y\}/(y^{2},I)$ where
  $I\subseteq (x,xyx)k\{x,xyx\}\subseteq k\{x,y\}$.
\end{enumerate}
\end{notn}

\begin{thm} \label{thm: A}
 Assume the notation from Notation \ref{notn: 2}. Then there exists
  $k$-algebra $A$ generated by two elements $x$ and $y$ such that
  that $y^{3}=y^{2}$, $y^{2}Ay^{2}\cong T$ and $A/(y^{2})$ is a homomorphic image of $B$.
\end{thm}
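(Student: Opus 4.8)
The plan is to realize $A$ as a quotient of the two‑generated $k$‑algebra
\[
C \ := \ k\{x,y\}\big/\big(y^{3}-y^{2},\ \langle I\rangle\big),
\]
where $\langle I\rangle$ is the two‑sided ideal of $k\{x,y\}$ generated by $I$. Writing $e=y^{2}$, the relation $y^{3}=y^{2}$ gives $e^{2}=y^{4}=y\cdot y^{3}=y\cdot y^{2}=y^{3}=y^{2}=e$, so $e$ is idempotent in $C$ and in every quotient of $C$. Since $C$ is generated by the images of $x$ and $y$, satisfies $y^{3}=y^{2}$, and $C/(y^{2})=k\{x,y\}/(y^{2},I)=B$, \emph{any} quotient $A=C/Q$ is automatically generated by two elements, satisfies $y^{3}=y^{2}$, and has $A/(y^{2})$ a homomorphic image of $B$. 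Thus the whole problem reduces to choosing $Q$ so that $y^{2}Ay^{2}\cong T$.

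For that I would follow the mechanism of the proof of Proposition~\ref{thm: main1}. Suppose we are given an ideal $\mathfrak p$ of the corner algebra $eCe$ with $eCe/\mathfrak p\cong T$ as unital $k$‑algebras; note $\mathfrak p=e\mathfrak pe$. Put $Q=C\mathfrak pC$. Because $\mathfrak p\subseteq eCe$ we have $Q\subseteq Cy^{2}C$, so $C/Q$ is nonzero and $(C/Q)/(y^{2})=B$. Moreover $eCe\cap Q=\mathfrak p$: the inclusion $\supseteq$ is clear, and if $z\in eCe\cap C\mathfrak pC$ then $z=eze\in e(C\mathfrak pC)e\subseteq(eCe)\,\mathfrak p\,(eCe)\subseteq\mathfrak p$. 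Hence
\[
y^{2}(C/Q)y^{2}\ =\ eCe\big/(eCe\cap Q)\ =\ eCe/\mathfrak p\ \cong\ T,
\]
and in this quotient $e=y^{2}$ maps to the identity of $T$, which is nonzero because $\mathfrak p$ is a proper ideal of the unital algebra $eCe$. So $A=C/Q$ has all the required properties, \emph{provided} such a $\mathfrak p$ exists; equivalently, provided $eCe$ admits a surjection of unital $k$‑algebras onto $T$. (If one additionally wants $A$ prime whenever $T$ is prime, one should instead take $Q$ maximal among ideals with $eQe=\mathfrak p$, which exists by Zorn's lemma and is prime by the same argument as in Proposition~\ref{thm: main1}; then $A/(y^{2})$ is only a homomorphic image of $B$, not necessarily equal to it, which is exactly why the statement is phrased that way.)

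The crux, and the step I expect to be the main obstacle, is to show that $eCe=y^{2}Cy^{2}$ is a free $k$‑algebra of countably infinite rank (or at least surjects onto one); granting this, the countably generated unital algebra $T$ is a homomorphic image of such a free algebra and hence of $eCe$, yielding the surjection $eCe\twoheadrightarrow T$, and one finishes by taking $\mathfrak p=\ker(eCe\twoheadrightarrow T)$ and $A=C/C\mathfrak pC$. I would establish the freeness in two stages. First, in $C'=k\{x,y\}/(y^{3}-y^{2})$ the rewriting $yyy\to yy$ is terminating and confluent, so $C'$ has a basis of words with no factor $yyy$; a normal‑form computation then shows that $y^{2}C'y^{2}$ has basis $\{y^{2}\}\cup\{y^{2}vy^{2}\}$ with $v$ ranging over those reduced words that begin and end with $x$, that the product of corner basis elements is $y^{2}v_{1}y^{2}\cdot y^{2}v_{2}y^{2}=y^{2}(v_{1}y^{2}v_{2})y^{2}$, and that, since every reduced word factors uniquely at its maximal $yy$‑blocks, $y^{2}C'y^{2}$ is the free $k$‑algebra on the countably infinitely many \emph{indecomposable} corner words (those $v$ having no factor $yy$). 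Second, one passes to $C=C'/\langle I\rangle$: because $I\subseteq(x,xyx)k\{x,xyx\}$, every monomial occurring in an element of $I$ has all its occurrences of $y$ isolated, so inside a corner word $y^{2}v_{1}y^{2}\cdots y^{2}v_{m}y^{2}$ it can only occur within a single block $v_{i}$ and never across a $y^{2}$; together with the hypothesis that $B=C/\langle y^{2}\rangle$ is infinite‑dimensional, this should ensure that the induced reduction on the corner destroys no $y^{2}$‑glue and leaves a still countably infinite free generating set. Making this second stage precise---in effect producing a confluent rewriting system (Gr\"obner basis) for $(y^{3}-y^{2},I)$ and controlling its restriction to $y^{2}Cy^{2}$---is where I would rely heavily on the monomial combinatorics of Lenagan, Smoktunowicz, and Young \cite{LSY}, and it is the technical heart of the argument.
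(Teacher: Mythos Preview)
Your approach matches the paper's: set $R=k\{x,y\}/(y^{3}-y^{2},I)$ (your $C$), note $e=y^{2}$ is idempotent, show the corner $eRe$ is a countably generated free $k$-algebra, surject it onto $T$ with kernel $P$, and pass to $A=R/Q$ for a suitable $Q$. The paper takes $Q$ maximal with $y^{2}Qy^{2}=P$ via Zorn's lemma and thereby gets $A$ prime---exactly the variant you describe in your parenthetical, and indeed primeness is what is actually used in the application to Theorem~\ref{thm: main2}.

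Where you diverge is in treating the freeness of $y^{2}Ry^{2}$ as a serious obstacle requiring a confluent rewriting system and the combinatorics of \cite{LSY}. The paper disposes of this in essentially one line: it picks a basis $r_{1},r_{2},\ldots$ for the span of words with only isolated $y$'s modulo $I$ and observes that the $y^{2}r_{i}y^{2}$ freely generate the corner. The reason is exactly the observation you already made: every monomial in an element of $I$ has each $y$ flanked by $x$'s, so an $I$-relation can only act within a single block $v_{i}$ of a corner word $y^{2}v_{1}y^{2}\cdots y^{2}v_{m}y^{2}$ and never across a $y^{2}$ separator, while the rule $y^{3}\to y^{2}$ is inert on such words. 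Nothing from \cite{LSY} is needed at this point---that paper enters only in Lemma~\ref{lem: LSY} and Proposition~\ref{prop: B}, to \emph{produce} a particular $B$ with nil augmentation ideal, not to analyze the corner of $R$. So your diagnosis of the mechanism is correct, but you have overestimated the difficulty of this step and misplaced where \cite{LSY} is invoked.
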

\begin{proof}  Let $R=k\{x,y\}/(y^{3}-y^{2},I)$. Note that
 $(y^{3}-y^{2},I)$ denotes the ideal of $k\{x,y\}$ generated by
$I$ and by the element $y^{3}-y^{2}$.

 Let $r_{1}, r_{2}, \ldots $ be a basis for
$\sum_{0\leq i,j \leq n}k\{x,xyx\}/I\cap k\{x,xyx\}$ as a $k$-vector space.

Observe that the elements $y^{2}r_{1}y^{2}, y^{2}r_{2}y^{2}\ldots $
 are generators of a countably-generated unital free noncommutative $k$-algebra, which we denote by $C$.

It follows that we have a surjective ring
homomorphism
\begin{equation}
\Phi:
C\  \rightarrow\
T. \label{eq: Phi1}
\end{equation}
  Let
\begin{equation}
P\ =\  {\rm ker}(\Phi)
\label{eq: P1}
\end{equation}
  Notice $P$ is a prime
ideal in $C$. Let $Q'=RPR+RP+PR+P$.  Then $Q'$ is an ideal in $R$
and $y^{2}Q'y^{2}=P$. Using Zorn's lemma we can choose an ideal
 $Q$ in $R$ maximal with the property
that $y^{2}Qy^{2}=P$. Then $Q$ is a prime ideal, because if
$Q\subseteq Q_{1}$ and $Q\subseteq Q_{2}$ for some ideals $Q_{1},
Q_{2}$ in $R$ then $Q_{1}Q_{2}\subseteq Q$ gives
$y^{2}Q_{1}y^{2}Cy^{2}Q_{2}y^{2}\subseteq y^{2}Qy^{2}=P$, and
so either $y^{2}Q_{1}y^{2}=P$ or $y^{2}Q_{2}y^{2}=P$, as $P$
is a prime ideal of $C$.

 Observe also that $C=y^{2}Ry^{2}$, because $y^{2}-1$ annihilates the homogeneous maximal ideal of $C$ when we regard $C$ as a subalgebra of $R$.
  Let $A=R/Q$. Observe that by construction we have
 $A=R/Q$ is prime; furthermore, $A$ is generated by elements $x,y$ and $A/(y^{2})$ is a homomorphic image of
 $B$.  Finally, we clearly have $y^{2}Ay^{2}\cong T$.  The result follows.
 \end{proof}
We next prove a technical lemma.  All of the groundwork needed for this result was done by Lenagan, Smoktunowicz, and Young \cite{LSY}.  In order to avoid unnecessary repetition, we will make use of the notation and proofs from their paper and a careful reading of this paper is essential for a full understanding of this lemma.
\begin{lem}\label{lem: LSY}
Let $k$ be a countable field.  Then there exists an infinite-dimensional $k$-algebra $R$
 generated by two elements $x,y$ such that $y^{2}=0$ and with the property that the ideal $(x,y)$ in $R$ is nil.  Furthermore, one can choose $R$ to have the property that its GK dimension is at most $3$.
 \end{lem}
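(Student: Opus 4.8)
The proof will be a straight re-run of the construction of Lenagan, Smoktunowicz, and Young \cite{LSY}, carried out while dragging along one extra relation, namely $y^2=0$. First I would recall the shape of their construction. Over a countable field $k$ they produce a nil algebra as a graded quotient $N=F/I$, where $F$ is the (non-unital) free $k$-algebra on two generators $x,y$ (one may take the construction of \cite{LSY} in this two-generated form), graded by total degree, and $I=\bigoplus_n I_n$ is a homogeneous ideal built in countably many stages. What makes this possible is that $F$ is a countable set, so one may enumerate its elements (or a set of homogeneous representatives) and, at the $i$-th stage, enlarge the ideal so as to annihilate a sufficiently high power of the $i$-th element; the substance of \cite{LSY} is the combinatorial bookkeeping showing that this can be arranged so that, simultaneously, $\dim(F/I)_n$ grows no faster than a polynomial of degree two in $n$ — whence ${\rm GKdim}(N)\le 3$ — and $F/I$ stays infinite-dimensional. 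In $N$ every element is nilpotent, and the ideal generated by $x$ and $y$ is all of $N$.

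The plan is then to run exactly the same construction inside $\bar F:=F/(y^2)$ in place of $F$; equivalently, to place $y^2$ in the ideal at the outset and form $R:=F/\langle I,y^2\rangle$. Since $\bar F$ is still a countable set of exponential growth, there is no shortage of room for the enumeration-and-killing-powers argument, and since $R$ is a homomorphic image of the \cite{LSY} algebra $N$ it is automatically nil, so the ideal $(x,y)$ — which equals all of $R$, as $R$ is generated by $x$ and $y$ as a non-unital algebra — is nil; likewise ${\rm GKdim}(R)\le {\rm GKdim}(N)\le 3$. By construction $R$ is generated by the images of $x$ and $y$ and satisfies $y^2=0$. If one prefers a unital $R$ in the statement, one replaces it by $k\cdot 1\oplus R$, so that $(x,y)$ becomes the nil augmentation ideal, of codimension one. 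Everything except infinite-dimensionality is now immediate.

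The one point that is not automatic, and that I expect to be the crux, is that imposing $y^2=0$ does not collapse the algebra to something finite-dimensional: passing to a quotient can only make $F/I$ smaller, so one must check that the lower bounds of \cite{LSY} survive. I would handle this by returning to their proof of infinite-dimensionality, which exhibits for each $n$ an explicit family of degree-$n$ monomials that remain linearly independent in $F/I$, and verifying — this is precisely the close reading the statement of the lemma alludes to — that these witnessing monomials can be chosen with no two consecutive occurrences of $y$, for instance of the form $x^{a_0}yx^{a_1}y\cdots yx^{a_r}$ with every $a_j\ge 1$. Such monomials are untouched by the relation $y^2=0$, hence remain linearly independent in $R$, so $R$ is infinite-dimensional; together with the properties recorded above, this gives the lemma.
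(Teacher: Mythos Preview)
Your approach is correct in outline and close in spirit to the paper's, but the execution differs in a way worth noting. You propose to take the \cite{LSY} algebra $N=F/I$ as a black box, pass to $R=N/(y^2)$, and then go back into the proof of infinite-dimensionality in \cite{LSY} to check that the witnessing monomials can be taken free of the subword $yy$. The paper instead exploits the fact that Theorem~3 of \cite{LSY} takes the initial sets $V_i,U_i$ as \emph{input}: by choosing $V_2=\{xx,xy\}$ and $V_4=\{xxxx,xxxy\}$ (so that every degree-$4$ word not beginning with $xxx$ lies in $U_4$), one checks directly that any degree-$8$ word containing $yy$ lands in $U(4)H(4)+H(4)U(4)$, hence $y^2\in E$ already. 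With these choices, Theorem~3 of \cite{LSY} delivers an infinite-dimensional nil algebra of GK dimension at most $3$ in which $y^2=0$ holds automatically---no separate infinite-dimensionality check is needed. Your route would work, but it requires reopening the \cite{LSY} proof; the paper's route uses that theorem as a black box once the initial data are specified, which is cleaner and is exactly the flexibility the \cite{LSY} framework is designed to provide.
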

\begin{proof} We use the construction and notation from
the paper of Lenagan, Smoktunowicz, and Young \cite{LSY}. Let
$V_{i}, U_{i}, H(n)$ be as in Theorem $3$ from this paper. Observe
that if we take \begin{equation} V_{1}=\{x,y\},\qquad
U_{1}=\{\emptyset \}\end{equation} and
 \begin{equation}V_{2}=\{xx,xy\}, \qquad U_{2}=\{yy,yx \}\end{equation}
 and
 \begin{equation} V_{4}=\{xxxx,xxxy\}, \end{equation}
 \begin{equation}
U_{4}=\{y^4, yyxy, yyxx, yyyx, xyyy, xxyy, yxyy, yxxx, yxxy, yxyx, xyyx, xxyx,xyxy,xyxx \}\end{equation} and apply Theorem 3 of \cite{LSY}, then $$H(i)y^{2}H(8-i-2)\subseteq
 U(4)H(4)+H(4)U(4)$$ and thus $y^{2}\in E$ where $E$ is the ideal
 defined in \cite{LSY} with the property that the image of the ideal $(x,y)$ in $k\{x,y\}/E$ is nil
and $k\{x,y\}/E$ is an algebra of GK dimension not exceeding three. Thus the image of $y^2$ in $k\{x,y\}/E$ is zero.
\end{proof}
\begin{prop} \label{prop: B} Let $k$ be a countable field.   Then there is a two-sided ideal
$I$ of $k\{x,y\}$ satisfying the following properties:
\begin{enumerate}
\item $I$ is generated by elements from $(x,xyx)k\{x,xyx\}$;
\item the $k$-algebra $B= k\{x,y\}/(y^{2},I)$ is infinite-dimensional as a $k$-vector space;
\item the image of the ideal $(x,y)$ in $B$ is nil.
\end{enumerate}\end{prop}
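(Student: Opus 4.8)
The plan is to take the algebra $R = k\{x,y\}/E$ produced by Lemma \ref{lem: LSY}, in which $y^2 = 0$ and the image of the augmentation ideal $(x,y)$ is nil, and extract from the defining relations a presentation of the required shape $k\{x,y\}/(y^2,I)$ with $I$ generated inside $(x,xyx)k\{x,xyx\}$. First I would observe that $E$ already contains $y^2$ by Lemma \ref{lem: LSY}, so $R = k\{x,y\}/E$ is a quotient of $B_0 := k\{x,y\}/(y^2)$. Working in $B_0$, every word in $x$ and $y$ reduces to a word with no two consecutive $y$'s; modulo $y^2$ one checks that the subalgebra generated by $x$ and $xyx$ — together with left multiplication by $y$, right multiplication by $y$, and the central-type interactions — spans $B_0$. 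Concretely, a reduced word is a product of blocks, and after peeling off a possible leading $y$ and a possible trailing $y$, what remains is a word in the letters $x$ and $xyx$. This gives a vector-space decomposition
\[
B_0 \ = \ k\{x,xyx\} \ + \ y\,k\{x,xyx\} \ + \ k\{x,xyx\}\,y \ + \ y\,k\{x,xyx\}\,y,
\]
and the key point is that the ideal of $B_0$ generated by any subset $I \subseteq (x,xyx)k\{x,xyx\}$ is controlled by $I$ itself: conjugating by $y$ on either side stays within the four-block decomposition.

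Next I would take $I$ to be the preimage in $(x,xyx)k\{x,xyx\}$ of the relations of $R$; more precisely, let $J = E/(y^2) \subseteq B_0$ and set $I = J \cap (x,xyx)k\{x,xyx\}$. The claim to verify is that the two-sided ideal of $B_0$ generated by $I$ is all of $J$, so that $B := k\{x,y\}/(y^2,I) \cong R$. This is where the careful bookkeeping lives: one must show that every generator of $E$, after reducing modulo $y^2$, lies in the two-sided ideal generated by elements of $(x,xyx)k\{x,xyx\}$. Here I would use the structure of the construction in \cite{LSY}: the relations defining $E$ come from the homogeneous components $H(n)$ and the "bad" monomial sets $U_i$, and in the choices made in Lemma \ref{lem: LSY} the sets $V_i, U_i$ were arranged so that, modulo $y^2$, the generators of $E$ are supported on words beginning and ending in $x$ with no isolated $y$'s at the ends — i.e. words in $x$ and $xyx$ — or can be brought to that form by multiplying by $x$ on the left and right (which is harmless since $x$ is a generator). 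Granting this, $B \cong R$, so $B$ is infinite-dimensional with $(x,y)$ nil, establishing (2) and (3), while (1) holds by construction.

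The main obstacle is precisely the reduction step: verifying that the relations inherited from the Lenagan–Smoktunowicz–Young algebra $E$ can be chosen, modulo $y^2$, to sit inside $(x,xyx)k\{x,xyx\}$ rather than merely inside the augmentation ideal. The worry is relations involving a "dangling" $y$ at one end that cannot be absorbed. I expect this to be handled by exploiting that $E$ is a graded ideal whose generators can be multiplied by $x$ freely, together with the specific form of $U_4$ chosen in Lemma \ref{lem: LSY} (every element of $U_4$ of the relevant length has $y^2$ as a factor or begins and ends appropriately), so that after passing mod $y^2$ and padding with $x$'s the generators land where we need them; the remaining relations that genuinely involve an end-$y$ are consequences of $y^2 = 0$ and the $(x,xyx)$-relations via conjugation by $y$. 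A clean way to present this is to argue at the level of the four-block decomposition of $B_0$ displayed above and show $J$ is the smallest block-closed ideal containing $I$.
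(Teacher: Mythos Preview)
Your approach has a genuine gap, and it differs from the paper's in an essential way. You aim to prove $B \cong R$ by showing that the image of $E$ in $B_0 = k\{x,y\}/(y^2)$ is generated as a two-sided ideal by its intersection with $(x,xyx)k\{x,xyx\}$. The obstacle you flag is real, and your proposed resolution rests on a misconception: you assert that, modulo $y^2$, words ``beginning and ending in $x$ with no isolated $y$'s at the ends'' are words in $x$ and $xyx$. That is false. Membership in $k\{x,xyx\}$ is a global condition: a reduced word lies there iff it starts and ends in $x$ \emph{and} every internal $x$-run between consecutive $y$'s has length at least two (equivalently, no $yxy$ substring). For instance $xyxyx$ begins and ends in $x$, has no $y^2$, yet is not a product of $x$ and $xyx$. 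So padding an arbitrary relation from $E$ by a single $x$ on each side does \emph{not} land you in $k\{x,xyx\}$, and your claim that ``multiplying by $x$ on the left and right is harmless'' goes the wrong way: replacing a relation $e$ by $xex$ weakens the ideal, it does not recover $e$.

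The paper sidesteps this entirely by abandoning the requirement $B\cong R$. It first extracts $p$ with $x^p\in E$ (using that $x$ is nilpotent in $R$, and $p>2$ since $R$ is infinite-dimensional with $y^2=0$), and then sets
\[
B \;=\; k\{x,y\}\big/\big(y^2,\ x^p,\ xEx,\ xyx^{i}Ex^{j}yx\ :\ 0\le i,j<p\big).
\]
All the listed generators already lie in $E$, so $B$ surjects onto $R$, giving (2) for free. The point of the relation $x^p$ is that once $y^2=0$ and $x^p=0$, any word of length $\ge p$ contains a $y$, so it ends either in $x$ or in $xy$ and begins either with $x$ or with $yx$; combined with the padded generators $xEx$ and $xyEyx$ (and the fact that $E$ is a two-sided ideal, so $ye, ey\in E$ whenever $e\in E$), one checks that $a^{p}\cdot a^{n}\cdot a^{p}$ lies in the defining ideal whenever $a^{n}\in E$. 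Thus nilness of $(x,y)$ in $R$ transfers to $B$, giving (3). The moral: rather than trying to realize all of $E$ by $(x,xyx)$-shaped generators, impose only the padded relations together with $x^p$ --- enough to force nilness, few enough to keep $B$ infinite-dimensional via the surjection onto $R$.
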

\begin{proof}
 Let $R$ be the algebra defined in Lemma \ref{lem: LSY}.
 Then $R=k\{x,y\}/E$ for some ideal $E$ of $k\{x,y\}$. Because the image of the homogeneous maximal ideal $(x,y)$ of $k\{x,y\}$ is nil in $R$, there is $p$ such that $x^{p}\in
 E$.  Since $N$ is infinite-dimensional as a $k$-vector space and $y^{2}\in E$ we see that
 $p>2$.

  Let $$B=k\{x,y\}/(\{xEx ,y^{2}, x^{p}, xyx^{i}Ex^{j}yx\, :\, 0\geq i,j<p\}).$$  Then $B$ satisfies the conclusion of the statement of the proposition.  \end{proof}
\begin{proof}[Proof of Theorem \ref{thm: main2}] Let $T$ be infinitely generated locally finite
primitive algebra with unity over $k$. Let $B$ be an algebra
satisfying the conclusion of Proposition \ref{prop: B}.  By Theorem \ref{thm: A},
there exists prime algebra  $A$ generated by two
 elements $x,y$ such that $y^{4}=y^{2}$ and $y^{2}Ay^{2}$ is
 isomorphic to $T$.  Since $y^2$ is idempotent and $T$ is primitive, we see that $A$ is primitive by a result of Lanski, Resco, Small \cite{LRS}.
It only remains to show that $A$ is
 algebraic. Note that $A/(y^{2})$ is a homomorphic image of $B$ and hence is algebraic.  Furthermore $y^{2}$ generates a
 locally finite ideal of $A$, and hence $A$ is algebraic over $k$.  The result follows.
\end{proof}

 \section{Questions}
 In this section, we pose some questions related to algebraic algebras and our main result.
  \begin{quest} Does there exist some real number $\alpha>2$ such that for every $\beta\ge \alpha$ there exists a finitely generated nil algebra whose GK dimension is exactly $\beta$?
 \end{quest}

 It is the opinion of the authors that by suitably modifying the homomorphism $\phi$ which is used in our construction, one should in fact be able to construct finitely generated nil primitive algebras of every GK dimension larger than or equal to six.

 \begin{quest} Does there exist a finitely generated, infinite-dimensional $k$-algebra that is a division ring of finite GK dimension?
 \end{quest}

Even answering this question for algebras of quadratic growth would be an impressive result.  In fact, there are no known examples of division rings that are algebraic over their centers that do not have the property that each finitely generated subalgebra is finite-dimensional over its center.

 \begin{quest} Does there exist a finitely generated infinite-dimensional unital simple algebraic algebra of finite GK dimension?
 \end{quest}

 We have constructed a finitely generated infinite-dimensional unital primitive algebra of finite GK dimension.  The next logical step is to attempt to modify this construction somehow to create a simple algebra with the aforementioned properties.  Unfortunately, there is an obstacle that one immediately encounters; namely, algebras constructed via the affinization method always have a nonzero proper two-sided ideal generated by the image of $ye_{2,2}$; moreover, any homomorphic image under which this ideal becomes zero is a homomorphic image of $B$.
The third-named author \cite{Sm3}, on the other hand, has constructed a simple nil algebra
 (which clearly cannot be unital).  Observe that by Nakyama's lemma a finitely generated Jacobson radical algebra cannot be simple.

 \begin{quest} Does there exist a finitely presented infinite-dimensional algebraic algebra of finite GK dimension?
 \end{quest}

 We note that all constructions of infinite-dimensional algebra algebras so far have required infinite sets of relations in order to obtain algebraic algebras.  It is conjectured that the corresponding question for groups (with the polynomial growth restriction removed), namely, whether or not there exists a finitely presented infinite torsion group, has a negative answer.  A negative answer to the above question would both lend verisimilitude to the group theoretic conjecture and may also provide techniques which could eventually be used to prove this conjecture.

 \begin{quest} Does there exist a finitely generated infinite-dimensional algebraic algebra of finite GK dimension over an uncountable base field? \end{quest}

 One of the main problems with trying to do the Lenagan and Smoktunowicz construction over an uncountable field $k$ is that a countable enumeration of the elements of the free $k$-algebra on two generators is required.  This can be relaxed somewhat, but it does not seem possible to modify their construction to obtain an  infinite-dimensional algebraic algebra of finite GK dimension over an uncountable base field.
 \begin{quest} Does there exist a finitely generated algebraic algebra of GK dimension two?
 \end{quest}
 Lenagan, Smoktunowicz, and Young \cite{LSY} produced a finitely generated infinite-dimensional algebraic algebra (over a countable base field) whose GK dimension is at most three.  It should be noted that finitely generated algebras of GK dimension strictly less than $2$ satisfy a polynomial identity by Bergman's gap theorem \cite[Theorem 2.5]{KL} along with a theorem of Small, Stafford, and Warfield \cite{SSW}.  Consequently, the question of what is the infimum over all Gelfand-Kirillov dimensions of finitely generated infinite-dimensional algebraic algebras is still unresolved.
\begin{quest}
Can one give a construction as done in Theorem \ref{thm: main2} when $k$ is uncountable?  Can one modify the construction in Theorem \ref{thm: main2} to give an algebra of finite GK dimension?
\end{quest}
In the authors' opinion, the first part should not be too
difficult, but it will probably require a deep understanding of
the paper \cite{LSY}.  The second part will require delicate
estimates, but is probably quite doable, however the calculations
involved appear to be more complicated and less elegant than those
used in the construction of Theorem \ref{thm: main1}.
 \begin{quest} Are there other interesting algebras of low GK dimension that can be constructed using Proposition \ref{thm: main1}?
 \end{quest}
As an example, let $k$ be a field and let $T=k[x_1,x_2,\ldots]/I$, where $I$ is the ideal generated by $\{x_i^{i+1}~:~i\ge 1\}$.  Then $T$ has prime radical $J$ generated by the image of $(x_1,x_2,\ldots)$, which is nil and not nilpotent.  Observe that $J$ is the sum of all nilpotent ideals of $T$ and hence $T$ has no maximal nilpotent ideal.  

Since $T$ has GK dimension $0$, we can apply Proposition \ref{thm: main1}, taking $B$ to be a nil ring of GK dimension at most $3$, as we did in the proof of Theorem \ref{thm: main}, to obtain an algebra $A$ whose GK dimension is at most six.   Moreover, by using basic facts about corners, one can see that $A$ is a finitely generated $k$-algebra with a non-nilpotent prime radical and without a maximal nilpotent ideal.  This example answers a question of Lvov \cite[Question 2.69]{FKS}.  In a similar manner, one should be able to construct strange examples of $2$-generated Jacobson radical algebras that are not nil.

\end{document}